\definecolor{gr}{rgb}   {0.,   0.69,   0.23 }
\definecolor{bl}{rgb}   {0.,   0.5,   1. }
\definecolor{mg}{rgb}   {0.85,  0.,    0.85}
\definecolor{yl}{rgb}   {0.8,  0.7,   0.}
\definecolor{or}{rgb}  {0.7,0.2,0.2}
\tikzset{
	dot/.style={circle,fill=black,draw=black,inner sep=0pt,minimum size=0.5mm},
	>=stealth,
	}
\tikzset{
	dot2/.style={circle,fill=black,draw=black,inner sep=0pt,minimum size=0.2mm},
	>=stealth,
	}
\tikzset{
	ddot/.style={circle,fill=black,draw=black,inner sep=0pt,minimum size=0.8mm},
	>=stealth,
	}
\tikzset{decision/.style={ 
        draw,
        diamond,
        aspect=1.5
    }}
\tikzset{dia2/.style
={diamond,fill=white,draw=black,inner sep=0pt,minimum size=1mm},
	>=stealth,
	}
\tikzset{dia/.style
={star,fill=black,draw=black,inner sep=0pt,minimum size=1mm},
	>=stealth,
	}
\tikzset{dia/.style
={diamond,fill=black,draw=black,inner sep=0pt,minimum size=1.3mm},
	>=stealth,
	}
\def\DeclareSymbol#1#2#3{\xsavebox{#1}{\tikz[baseline=#2,scale=0.15]{#3}}}
\def\<#1>{\xusebox{#1}}
\newsavebox{\peA}
\newsavebox{\pneA}
\newsavebox{\plA}
\newsavebox{\pgA}
\newsavebox{\pleA}
\newsavebox{\pgeA}
\newsavebox{\pezA}
\savebox{\peA}{\tikz \draw (0,0) node[shape=circle,draw,inner sep=0pt,minimum size=8.5pt] {\scriptsize  $=$};}
\savebox{\pneA}{\tikz \draw (0,0) node[shape=circle,draw,inner sep=0pt,minimum size=8.5pt] {\footnotesize $\neq$};}
\savebox{\plA}{\tikz \draw (0,0) node[shape=circle,draw,inner sep=0pt,minimum size=8.5pt] {\scriptsize $<$};}
\savebox{\pgA}{\tikz \draw (0,0) node[shape=circle,draw,inner sep=0pt,minimum size=8.5pt] {\scriptsize $>$};}
\savebox{\pleA}{\tikz \draw (0,0) node[shape=circle,draw,inner sep=0pt,minimum size=8.5pt] {\scriptsize $\leqslant$};}
\savebox{\pgeA}{\tikz \draw (0,0) node[shape=circle,draw,inner sep=0pt,minimum size=8.5pt] {\scriptsize $\geqslant$};}
\savebox{\pezA}{\tikz \draw (0,0) node[shape=circle,draw,
fill=white, 
inner sep=0pt,minimum size=8.5pt]{} ;}
\def \peB{\mathchoice
{\scalebox{.7}{{\usebox{\peA}}}}
{\scalebox{.7}{{\usebox{\peA}}}}
{\scalebox{.7}{{\usebox{\peA}}}}
{}
}
\def \pezB{\mathchoice
{\scalebox{.7}{{\usebox{\pezA}}}}
{\scalebox{.7}{{\usebox{\pezA}}}}
{\scalebox{.7}{{\usebox{\pezA}}}}
{}
}
\newcommand{\pe}{\mathbin{{\peB}}}
\newcommand{\pez}{\mathbin{{\pezB}}}
\def\R{\mathbb{R}}
\def\r2n{{\mathbb{R}^{2n}}}
\def\N{\mathbb{N}}
\def\supp{\operatorname{supp}}
\tikzset{>=stealth',
         cvertex/.style={circle,draw=black,inner sep=1pt,outer sep=3pt},
         vertex/.style={circle,fill=black,inner sep=1pt,outer sep=3pt},
         star/.style={circle,fill=yellow,inner sep=0.75pt,outer sep=0.75pt},
         tvertex/.style={inner sep=1pt,font=\scriptsize},
         gap/.style={inner sep=0.5pt,fill=white}}
\tikzstyle{mybox} = [draw=black, fill=blue!10, very thick,
\tikzstyle{boxtitle} =[fill=blue!50, text=white,rectangle,rounded corners]
\tikzstyle{decision} = [diamond, draw, fill=blue!20,
\tikzstyle{block} = [rectangle, draw, fill=blue!20,
\tikzstyle{line} = [draw, very thick, color=black!50, -latex']
\tikzstyle{cloud} = [draw, ellipse,fill=red!40,
\tikzstyle{cloud2} = [draw, ellipse,fill=red!30, text=white,text width=10em, node distance=2.5cm, text centered, minimum height=4em]
\tikzstyle{cloud3} = [draw, ellipse, fill=cyan!30,
\tikzstyle{cloud4} = [draw, ellipse,fill=orange!70, node distance=2.5cm,
\tikzstyle{cloud5} = [draw, ellipse,fill=red!20, node distance=2.5cm,
\tikzstyle{cloud6} = [draw, ellipse,fill=red!20, node distance=2.5cm,
\tikzset{
    position/.style args={#1:#2 from #3}{
        at=(#3.#1), anchor=#1+180, shift=(#1:#2)
    }
}
\newtheorem{theorem}{Theorem} [section]
\newtheorem{remark}[theorem]{Remark}
\newtheorem{definition}[theorem]{Definition}
\newtheorem{oldtheorem}{Theorem}
\newcommand{\noi}{\noindent}
\newcommand{\al}{\alpha}
\newcommand{\be}{\beta}
\newcommand{\nb}{\nabla}
\newcommand{\s}{\sigma}
\newcommand{\ft}{\widehat}
\newcommand{\wt}{\widetilde}
\newcommand{\dx}{\partial_x}
\newcommand{\dd}{\partial}
\newcommand{\les}{\lesssim}
\newcommand{\ges}{\gtrsim}
\newcommand{\ind}{\mathbf 1}
\renewcommand{\S}{\mathcal{S}}
\newtheorem*{ackno}{Acknowledgements}
\numberwithin{equation}{section}
\numberwithin{theorem}{section}
\newcommand{\too}{\longrightarrow}
\newcommand{\Id}{\mathrm{Id}}
\newcommand{\BMO}{\textit{BMO}}
\newcommand{\CMO}{\textit{CMO}}
\newcommand{\CZ}{Calder\'on-Zygmund }
\DeclareRobustCommand\widecheck[1]{{\mathpalette\@widecheck{#1}}}
\def\@widecheck#1#2{%
   \setbox\z@\hbox{\m@th$#1#2$}%
   \setbox\tw@\hbox{\m@th$#1%
      \widehat{%
         \vrule\@width\z@\@height\ht\z@
         \vrule\@height\z@\@width\wd\z@}$}%
   \dp\tw@-\ht\z@
   \@tempdima\ht\z@ \advance\@tempdima2\ht\tw@ \divide\@tempdima\thr@@
   \setbox\tw@\hbox{%
      \raise\@tempdima\hbox{\scalebox{1}[-1]{\lower\@tempdima\box\tw@}}}%
   {\ooalign{\box\tw@ \cr \box\z@}}}
\begin{document}
\baselineskip = 14pt

\title[Compactness of pseudodifferential operators]
{Symbolic calculus for a class of pseudodifferential operators with applications to compactness}

\author[\'A. B\'enyi, T. Oh, and R.H. Torres]{\'Arp\'ad B\'enyi, Tadahiro Oh, and Rodolfo H. Torres}

\address{\'Arp\'ad B\'enyi, Department of Mathematics\\
516 High St, Western Washington University\\ Bellingham, WA 98225,
USA.}

\email{benyia@wwu.edu}

\address{
Tadahiro, Oh,
School of Mathematics\\
The University of Edinburgh\\
and The Maxwell Institute for the Mathematical Sciences\\
James Clerk Maxwell Building\\
The King's Buildings\\
Peter Guthrie Tait Road\\
Edinburgh\\
EH9 3FD\\
United Kingdom}

\email{hiro.oh@ed.ac.uk}

\address{Rodolfo H. Torres, Department of Mathematics\\
University of California\\
900 University Ave., Riverside, CA 92521, USA}

\email{rodolfo.h.torres@ucr.edu}

\subjclass[2020]{42B20, 47B07, 42B25}

\keywords{pseudodifferential operator; compactness; $T(1)$ theorem; CMO; commutator}

\begin{abstract}
We prove a symbolic calculus for a class of pseudodifferential operators, and discuss its applications to $L^2$-compactness via a compact version of the $T(1)$ theorem.
\end{abstract}

\dedicatory{Dedicated to Jill Pipher in recognition of her outstanding contributions to mathematics and the mathematical profession overall}

%
\maketitle
%

\tableofcontents

\section{Introduction}

Starting with the work of Villarroya \cite{V}, there has been some interest in developing versions of the
$T(1)$ theorem of David and Journ\'e \cite{DJ} to characterize not only the boundedness of  Calder\'on-Zygmund operators but also their compactness; see, for example,  \cite{MitSto, FraGreWic, BLOT1, CLSY, BLOT2}. Unfortunately, while elegant and highly non-trivial, such characterizations have not revealed many new examples of compact Calder\'on-Zygmund operators. Perhaps the most important examples of compact Calder\'on-Zygmund operators are certain paraproducts, but their compactness can be proved by other methods,
 and in fact they are used in the proof of the characterizations of compactness.  One of the purposes of this article is to provide some new examples involving pseudodifferential operators and their commutators.

Sufficient conditions for the compactness of pseudodifferential operators on $L^2$ were originally provided by Cordes \cite{Cordes}.  However, such conditions fail to produce Calder\'on-Zygmund operators. In fact, while compact on $L^2$, Cordes' operators may not even be bounded on $L^p$ for other $p\in (1, \infty)$. In a recent work \cite{CarSorTor}, Carro, Soria and the last named author of this article combined Cordes' conditions with the typical decay of the H\"ormander class $S_{1, 0}^0$ to obtain, via a new extrapolation result, compactness of the resulting Calder\'on-Zygmund operators on weighted $L^p(w)$ spaces for all $1<p<\infty$ and all weights $w$ in the classical Muckenhoupt classes $A_p$ (whose definition will not be needed in this article)\footnote{The result can also be obtained by more general extrapolation results in \cite{coy, hl}. However, the more restrictive extrapolation in \cite{CarSorTor} has a straightforward proof and produces a stronger uniformly compact result in some appropriate sense. See \cite{CarSorTor} for further details.}. The result in \cite{CarSorTor} was presented at the conference in honor of Professor Jill Pipher to whom this article is dedicated.  Since then there has been some interest in looking at  further compactness results for pseudodifferential operators; see \cite{otropaper}.

As in the case of the classical $T(1)$ theorem, the analogous characterizations of compactness required conditions which are symmetric on the operator $T$ in question and its transpose~$T^*$.  This usually creates a non-trivial task, when the operator is presented by a symbol in pseudodifferential form and not via an explicit kernel, to verify that the class of symbols is invariant by transposition (which, as it is known, is not always the case for
general pseudodifferential operators). We will develop a symbolic calculus for the operators considered in \cite{CarSorTor}
(see Theorem \ref{THM:CPT}), and then use it to prove a new result on the compactness of commutators of a certain class of pseudodifferential operators and multiplication by bounded Lipschitz functions, using indeed the compact $T(1)$ theorem;
see Theorem \ref{THM:2}.

\section{A class of pseudodifferential operators}

We start by recalling the H\"ormander classes of symbols $S_{1, 0}^s$.
Given $s\in \mathbb R$, we say that a symbol $\sigma:\mathbb R^d\times\mathbb R^d\to\mathbb C$ belongs to the H\"ormander class $S_{1, 0}^s$, and we write $\sigma\in S_{1, 0}^s$, if it  satisfies
\[
|\partial_x^\alpha\partial_\xi^\beta \sigma (x, \xi)|\lesssim_{\alpha, \beta}(1+|\xi|)^{s-|\beta|}
\]
for all multi-indices $\alpha, \beta$. Associated with such a symbol $\sigma$, we have the pseudodifferential operator $T_\sigma$, a priori defined from $\mathcal S(\mathbb R^d)\to\mathcal S'(\mathbb R^d)$, given by
\begin{align*}
T_\sigma(f)(x)=\int_{\mathbb R^d}\sigma (x, \xi)\widehat f(\xi) e^{ix\cdot\xi} d\xi.
\end{align*}

\noi
It is a standard fact that $S_{1, 0}^0$ is closed under transposition. That is, if $\sigma\in S_{1, 0}^0$, then $(T_\sigma)^{*}=T_{\sigma^*}$ with $\sigma^{*}\in S_{1, 0}^0$; see, for example, \cite[p.\,259]{Stein} (see also  \cite[Theorem 1]{BenTor} for a bilinear version of this fact). Moreover,  if $\sigma\in S_{1, 0}^0$, then $T_\sigma$ is a Calder\'on-Zygmund operator.

In what follows, we restrict our attention to a \emph{vanishing at infinity} sub-class of $S_{1, 0}^0$ considered in \cite[Theorem 3.2]{CarSorTor}; see also \cite{Cordes}. We say that a symbol $\sigma=\sigma(x, \xi)$ belongs to the class $V_{1, 0}^s$ if it satisfies
\begin{equation}
\label{VCM}
|\partial_x^\alpha\partial_\xi^\beta\sigma (x, \xi)|\leq K_{\alpha, \beta}(x, \xi)(1+|\xi|)^{s-|\beta|}
\end{equation}

\noi
 for all multi-indices $\alpha$ and  $\beta$,  where $K_{\alpha, \beta}(x, \xi)$ is a bounded function such that $$\displaystyle\lim_{|x|+|\xi|\to\infty}K_{\alpha, \beta}(x, \xi)=0.$$

Our main result is the following symbolic calculus for the class $V_{1, 0}^s$.

\begin{theorem}
\label{THM:CPT}
If $\sigma\in V_{1, 0}^s$, then $(T_\sigma)^{*}=T_{\sigma^*}$ with $\sigma^*\in V_{1, 0}^s$. Moreover, we have the asymptotic expansion
\begin{equation}
\label{asymptotic}
\sigma^*(x, \xi)=\sum_\alpha\frac{i^{-|\al|}}{\alpha!}\partial_x^\alpha\partial_\xi^\alpha\sigma(x, -\xi),
\end{equation}
in the sense that, for every $N\in\mathbb N$,
\begin{equation}
\label{asy2}
\sigma^*(x, \xi)-\sum_{|\alpha|<N}\frac{i^{-|\al|}}{\alpha!}\partial_x^\alpha\partial_\xi^\alpha\sigma(x, -\xi)\in V_{1, 0}^{s-N}.
\end{equation}
\end{theorem}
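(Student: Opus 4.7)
The plan is to realize $\sigma^*$ as an oscillatory integral, Taylor-expand in the integration variables, and read off the asymptotic series \eqref{asymptotic}, with the vanishing-at-infinity hypothesis on $\sigma$ transferring to the remainder by dominated convergence. First I would recover the standard integral representation: computing $\langle T_\sigma f,g\rangle$ with the bilinear pairing and rearranging, $T_\sigma^*$ appears as a pseudodifferential operator with amplitude $a(x,y,\xi)=\sigma(y,-\xi)$, whose proper symbol is given by the classical reduction
\[
\sigma^*(x,\xi) \;=\; \iint e^{-iw\cdot\zeta}\,\sigma(x+w,-\xi-\zeta)\,\frac{dw\,d\zeta}{(2\pi)^d},
\]
interpreted as an oscillatory integral (justified by Kumano-go type integration by parts, since $V_{1,0}^0\subset S_{1,0}^0$). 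Taylor-expanding $\sigma(x+w,-\xi-\zeta)$ in $(w,\zeta)$ about $(0,0)$ to order $N$ with integral remainder, and using the distributional identity $\iint e^{-iw\cdot\zeta}w^\alpha\zeta^\beta\,dw\,d\zeta/(2\pi)^d=c_\alpha\delta_{\alpha,\beta}$, produces the finite sum in \eqref{asy2} together with a remainder $R_N(x,\xi)$ that is itself an oscillatory integral involving $\partial_x^\gamma\partial_\xi^\delta\sigma$ with $|\gamma|+|\delta|=N$ evaluated at shifted arguments.

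The standard $S_{1,0}^0$ calculus, together with integration by parts using $(1-\Delta_\zeta)^M$ and $(1-\Delta_w)^{M'}$ applied to the oscillatory factor $e^{-iw\cdot\zeta}$, readily yields the H\"ormander bound $|\partial_x^{\alpha'}\partial_\xi^{\beta'}R_N(x,\xi)|\lesssim(1+|\xi|)^{-N-|\beta'|}$; this part is routine. The substantive new step is to upgrade this to an estimate of the form $K^N_{\alpha',\beta'}(x,\xi)(1+|\xi|)^{-N-|\beta'|}$ with $K^N_{\alpha',\beta'}(x,\xi)\to 0$ as $|x|+|\xi|\to\infty$. The strategy is to write, after enough integrations by parts for absolute convergence,
\[
\partial_x^{\alpha'}\partial_\xi^{\beta'}R_N(x,\xi) \;=\; \int_0^1 (1-t)^{N-1}\!\!\iint \Phi(x,\xi,w,\zeta,t)\,(\partial_x^\gamma\partial_\xi^\delta\sigma)(x+tw,-\xi-t\zeta)\,dw\,d\zeta\,dt,
\]
with $|\gamma|+|\delta|\ge N$ and $\Phi$ an integrable weight independent of $\sigma$. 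By hypothesis $K_{\gamma,\delta}(y,\eta)\to 0$ as $|y|+|\eta|\to\infty$; since $|x+tw|+|\xi+t\zeta|\to\infty$ as $|x|+|\xi|\to\infty$ for each fixed $(t,w,\zeta)$, the integrand vanishes pointwise, and dominated convergence supplies the required vanishing of $K^N_{\alpha',\beta'}$.

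The main obstacle is the calibration in this last step: one must perform enough integrations by parts in both $w$ and $\zeta$ to produce an absolutely integrable majorant, while keeping the derivatives of $\sigma$ in a configuration $(\gamma,\delta)$ whose $K_{\gamma,\delta}$ still decays at infinity. This is handled by splitting the $(w,\zeta)$-integration into $\{|w|+|\zeta|\le 1\}$ (where smoothness of $\sigma$ and its derivatives is immediate) and its complement (where iterated integration by parts against $e^{-iw\cdot\zeta}$ generates arbitrary polynomial decay in both $w$ and $\zeta$). Once \eqref{asy2} is established for every $N\ge 1$, the invariance $(T_\sigma)^*=T_{\sigma^*}$ with $\sigma^*\in V_{1,0}^0$ is immediate from the $N=1$ case, since the leading term $\sigma(x,-\xi)$ lies trivially in $V_{1,0}^0$.
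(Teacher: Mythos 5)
Your overall strategy coincides with the paper's: realize $\sigma^*$ via the compound/oscillatory-integral representation, Taylor-expand to extract the asymptotic series, and, crucially, transfer the vanishing-at-infinity hypothesis on $K_{\alpha,\beta}$ to the remainder by dominated convergence (you correctly identify this as the substantive new step, and your dominated-convergence argument, with an integrable majorant $\Phi$ independent of $\sigma$, is in the same spirit as the paper's treatment of $\widetilde B_N$, $A'_{N;\alpha,\beta}$ and $\widetilde K_N$).

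There is, however, a genuine gap in the Taylor step as you have written it. You propose to Taylor-expand $\sigma(x+w,-\xi-\zeta)$ \emph{jointly} in $(w,\zeta)$ to order $N$, so the remainder is an oscillatory integral over terms $\partial_x^\gamma\partial_\xi^\delta\sigma$ with $|\gamma|+|\delta|=N$. The claim that this ``readily yields'' the bound $|R_N|\lesssim(1+|\xi|)^{-N}$ is not correct as stated: for the extreme term $|\gamma|=N$, $\delta=0$, the symbol estimate \eqref{VCM} gives no $\xi$-decay whatsoever, and integrating by parts in $w$ with $(1-\Delta_w)^{M'}$ only improves $\zeta$-integrability, not $\xi$-decay. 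The paper circumvents this entirely by Taylor-expanding in \emph{one} variable only — the frequency shift in $\widehat b^{(2)}(x,z,z+\xi)$ — so that every term in the remainder carries exactly $N$ frequency derivatives of $\sigma$, each worth a factor $(1+|\xi+t z|)^{-1}$, and Peetre's inequality then delivers $(1+|\xi|)^{-N}$; the spatial derivatives $\partial_x^\alpha$ in the expansion arise from Fourier moments $\int z^\alpha\widehat b^{(2)}\,dz$, not from the Taylor expansion. Your version can be salvaged by converting the polynomial $w^\gamma$ in the remainder into $\partial_\zeta^\gamma$ acting on $e^{-iw\cdot\zeta}$ and integrating by parts in $\zeta$, which trades the spatial derivatives for frequency ones on $\sigma$ and recovers the needed decay — but this is precisely the nontrivial bookkeeping you have omitted, and it is why the classical presentations (Stein, Kumano-go, and this paper) expand in a single variable. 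You should also be aware that the paper first splits $|x-y|\leq 2$ from $|x-y|>1$ and treats the far part as an infinitely smoothing remainder $r(x,\xi)$ \emph{before} performing the Taylor expansion, whereas you expand globally and only then split $(w,\zeta)$; the paper's order of operations is cleaner because it keeps the Taylor remainder localized in the $y$-variable (the compactly supported cutoff $\phi(y)$ appears in $b$), which is what makes the moment computation \eqref{C8} and the dominated-convergence argument for $\widetilde A_{M;N}$ go through without fuss.
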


\begin{proof}
As mentioned above, the calculus in the statement of the theorem holds with $V_{1, 0}^s$ replaced by the larger class $S_{1, 0}^s$.
An  additional complexity in the current setting is a careful accounting of the constants involved in several estimates (which depend on the spatial variable $x$ and the frequency variable $\xi$)
in  showing that they vanish at infinity.
In the following, we will  recast the computations usually done for $S_{1, 0}^s$
by  following closely other presentations in the literature. In particular, the reader may consult  \cite[pp.\,238--240 and pp.\,258--259]{Stein}, or the more elaborate presentation in \cite{BenTor} (for $s=0$) for some of the omitted details and computations.

Given  $\sigma \in V_{1, 0}^s$, it suffices to prove the result for symbols of the form:
\begin{equation}\sigma_\epsilon(x,\xi) = \sigma(x,\xi)u(\epsilon x,\epsilon \xi) \label{sigmaepsilon},
\end{equation}
where $u(x, \xi)$ is a $C^\infty$  function with compact support and $u(0, 0) = 1$,
but with estimates independent of the support of
$\sigma_\epsilon$, $\epsilon \in (0,1)$. Assuming that this is the case, a standard limiting argument removes the above truncation of the symbol.
Henceforth, and for simplicity of  notation, we assume that $\sigma(x, \xi)$ is one of the $\sigma_\epsilon(x, \xi)$ with compact support.

For $f, g\in\mathcal S$, we have
\[
\langle T_\sigma (f), g\rangle=\int_y\int_x\int_\xi \sigma (x, \xi)g(x)e^{i(x-y)\cdot\xi}d\xi dx \, f(y)dy,
\]
where the integral is absolutely convergent due to the compact support assumption on $\sigma$. It follows from here that
\[
T_\sigma^*(g)(x)=\int_y\int_\xi \sigma(y, -\xi)g(y)e^{-i(y-x)\cdot\xi}d\xi dy=:T_{[c]}(g)(x),\]

\noi
where
\begin{align}
c(y, \xi):=\sigma (y, -\xi)
\label{C1}
\end{align}

\noi
and $T_{[c]}$ is a so-called \emph{compound operator}.
It is clear that $c$ satisfies the same bound \eqref{VCM} as $\sigma$. The next step is to show that, given the compound operator $T_{[c]}$, there exists a symbol $\sigma^{*}\in V_{1, 0}^0$ such that $T_{[c]}=T_{\sigma^*}$, and also show that the asymptotic expansion \eqref{asymptotic} for $\sigma^*$ holds. The computation below is justified by the assumption on $\sigma$ having compact support. We have
\begin{align*}
T_{[c]}(g)(x)&=\iint c(y, \xi)g(y) e^{-i(y-x)\cdot\xi}d\xi dy\\
&=\int_z\int_\xi\int_y (2\pi)^{-d}c(y, \xi)e^{-i(y-x)\cdot\xi}e^{i(y-x)\cdot z}dy d\xi \, \widehat g(z)e^{ix\cdot z}dz\\
&=T_{\sigma^*}(g)(x),
\end{align*}

\noi
where
\begin{align}
\sigma^*(x, \xi)=(2\pi)^{-d}\int_z\int_y c(y, z)e^{i(x-y)\cdot (z-\xi)} dy dz.
\label{C1z}
\end{align}

\noi
Let  $\phi\in C_c^\infty$ to be an even function such that $\phi(y)=1$ for $|y|\leq 1$ and $\phi(y)=0$ for $|y|>2$. We can then further write
\begin{align}
\begin{split}
(2\pi)^d\sigma^*(x, \xi)&=\iint \phi(x-y) c(y, z)e^{i(x-y)\cdot (z-\xi)} dydz+r(x, \xi)\\
&=\iint \phi(y)c(x+y, z+\xi)e^{-iy\cdot z}dydz+r(x, \xi)\\
&=m(x, \xi)+r(x,\xi),
\end{split}
\label{transpose}
\end{align}

\noi
where
\begin{align}\label{rm}
r(x, \xi)&=\int_z\int_y (1-\phi(x-y)) c(y, z)e^{i(x-y)\cdot (z-\xi)}dydz,\\
m(x, \xi)&=\int_z \widehat b^{(2)}(x, z, z+\xi)dz.\label{mr}
\end{align}

\noi
Here,  $\widehat b^{(2)}$ denotes the Fourier transform with respect of the $y$-variable of
\begin{align}
b(x, y, \xi):=\phi(y)c(x+y, \xi).
\label{C1a}
\end{align}

\noi
 We note that $b$ satisfies similar estimates to $c$, that is
\begin{equation}
\label{b-esti}
|\partial_x^\alpha\partial_y^\tau\partial_\xi^\beta b(x, y, \xi)|
\leq A_{\alpha, \tau, \beta}(x, y, \xi)(1+|\xi|)^{s-|\beta|},
\end{equation}

\noi
where $A_{\alpha, \tau, \beta}$ is a bounded function with compact support in
all the three variables
(in particular, supported on $\{|y|\le 2\}$).

Let us first concentrate on the remainder term $r(x, \xi)$ in \eqref{rm}. Let $N_1, N_2\in\mathbb N$ with $N_2>d/2$ and $N_1\geq 2N_2$. Integration by parts gives
\[
r(x, \xi)=\int_y\int_z\frac{(\Id-\Delta_y)^{N_1}\Delta_z^{N_2}\big((1-\phi(x-y)) c(y, z)\big)}{(1+|z-\xi|^2)^{N_1}|x-y|^{2N_2}}e^{i(x-y)\cdot (z-\xi)}dzdy,
\]

\noi
where we are integrating (in $y$) over $|y-x|>1$.
By \eqref{C1} and \eqref{VCM}, we have
\[
|r(x, \xi)| \le \int_y\int_z\frac{B_{N_1, N_2}(x, y, z)(1+|z|)^{s-2N_2}}{(1+|z-\xi|)^{2N_1}(1+|x-y|)^{2N_2}}dzdy,
\]

\noi
where $B_{N_1, N_2}$ is a bounded function with compact support in $y$ and $z$.
By applying Peetre's inequality (with $k=2N_2$)
\begin{equation}
\label{Peetre}
(1+|z|)^{s-k}\leq (1+|\xi|)^{s-k}(1+|z-\xi|)^{|s|+k}, k\geq 0,
\end{equation}
and  setting $N_1=N+\lceil |s|\rceil$ and $N_2=N/2$ for  some large $N$ (which we can assume to be even),
the last integral can then  be estimated by
\begin{align}
\begin{split}
&  (1+|\xi|)^{s-N}
\int_y\int_z\frac{B_{N, N/2}(x, y, z)}{(1+|z-\xi|)^{N}(1+|x-y|)^{N}}dzdy\\
& \quad  =: (1+|\xi|)^{s-N} \wt B_{N}(x, \xi).
\end{split}
\label{C1x}
\end{align}

\noi
Note that
 the integral defining $\wt B_N(x, \xi)$ is absolutely convergent,
 with the bound  independent of the support of $\sigma$.
 Furthermore, we have
\begin{align}
\lim_{|x|+|\xi|\to\infty}\wt B_N (x, \xi)=0.
\label{C2}
\end{align}

\noi
To see this last fact, notice that $B_{N, N/2}(x, y, z)$ can be controlled by a finite sum of the form:
\begin{align}
B_{N, N/2}(x, y, z) \lesssim \sum_j  L_j(x-y)H_j(y,z),
\label{C1y}
\end{align}

\noi
where the functions $L_j$ and $H_j$ are bounded, and $H_j(y, z) \to 0$ when $|y| +|z|\to \infty$, actually independently of the support
in view of  \eqref{sigmaepsilon} and the estimates
 \eqref{VCM}
satisfied by the original symbol $\s$.
Thus, it follows from \eqref{C1x} and \eqref{C1y}
that $\wt B_N (x, \xi)$ can be controlled by a finite sum of terms of the form:
$$
\int_{|y|>1}\int_z\frac{ L_j(y)H_j(x-y, z+\xi) }{(1+|z|)^{N}(1+|y|)^{N}}dzdy
$$
to which the dominated convergence theorem (in the parameter $(x,\xi)$) can be easily applied.
\noi
Hence, for any large $N \gg 1$, we have
 \begin{align}
 |r(x, \xi)|\leq \wt B_N (x, \xi)(1+|\xi|)^{s-N}
 \label{C3}
 \end{align}

 \noi
 where $\wt B_N $ satisfies \eqref{C2}.
  A similar calculation will show that, for
  any large $N \gg1$, we also have
\begin{equation}
\label{C4}
|\partial_x^\alpha\partial_\xi^\beta r(x, \xi)|\leq \wt B_{N; \alpha, \beta}(x, \xi)(1+|\xi|)^{s-N},
\end{equation}

\noi
where
$\wt B_{N; \alpha, \beta}$
is a bounded function such that
$\wt B_{N; \alpha, \beta}(x, \xi)\to 0$
as $|x|+|\xi|\to\infty$.
In other words, \emph{the remainder term $r$ is smoothing of infinite order}.

Next, we analyze the main term $m(x, \xi)$ in \eqref{mr}. Given a multi-index $\tau$, using \eqref{b-esti}, we have
\begin{align*}
|z^\tau \widehat b^{(2)}(x, z, z+\xi)|&=
\bigg|\int_y b(x, y, z+\xi)\partial_y^{\tau}(e^{-iy\cdot z})dy\bigg|\\
& =\bigg|\int_y (\partial_y^{\tau}b(x, y, z+\xi))e^{-iy\cdot z}dy\bigg|
\leq \int_y |\partial_y^{\tau}b(x, y, z+\xi)|dy\\
& \leq (1+|z+\xi|)^s \int_y A_{0, \tau, 0}(x, y, z+\xi)dy. 
\end{align*}

\noi
Thus, for any $N\in\mathbb N$, we obtain
\[
|\widehat b^{(2)}(x, z, z+\xi)|\leq A'_N(x, z+\xi)(1+|z+\xi|)^s (1+|z|)^{-N},
\]

\noi
where $A'_N$ is a bounded function such that $A'_{N}(x, z+\xi)\to 0$ as $|x|+|z+\xi|\to \infty$.
By a similar computation,  for any multi-indices $\alpha, \beta$,  we have
\begin{equation}
\label{b-derivatives}
|\partial_x^\alpha\partial_\xi^\beta\widehat b^{(2)}(x, z, z+\xi)|\leq A'_{N; \alpha, \beta}(x, z\xi)(1+|z+\xi|)^{s-|\beta|}(1+|z|)^{-N},
\end{equation}

\noi
where $A'_{N; \alpha, \beta}$
is a  bounded function such that
\begin{align}
\lim_{|x|+|\xi|\to \infty}A'_{N; \alpha, \beta}(x, \xi)= 0.
\label{C5}
\end{align}

\noi
We note from \eqref{C1a}
 that the functions $A'_{N; \alpha, \beta}$ do not depend on the support of $\sigma$ as they are obtained by integrating over $\{|y|\le 2\}$.
From \eqref{mr},
\eqref{b-derivatives}, and
  an appropriate version of \eqref{Peetre}, namely
$$(1+|z+\xi|)^{s-|\beta|}\leq (1+|\xi|)^{s-|\beta|}(1+|z|)^{|s|+|\beta|},$$
we have
\begin{align}
\begin{split}
|\partial_x^\alpha\partial_\xi^\beta m(x, \xi)|&\leq \int A'_{N; \alpha, \beta}(x, z+\xi)(1+|z+\xi|)^{s-|\beta|}(1+|z|)^{-N}dz\\
&\leq (1+|\xi|)^{s-|\beta|}\int A'_{N; \alpha, \beta}(x, z+\xi)(1+|z|)^{-N+|s|+|\beta|}dz.
\end{split}
\label{C6}
\end{align}

\noi
Now, let
\begin{align}
A''_{\alpha, \beta}(x, \xi):=\int A'_{d+1+\lceil |s|\rceil+|\beta|; \alpha, \beta}(x, z+\xi)(1+|z|)^{-d-1} dz.
\label{C7}
\end{align}

\noi
Then, from \eqref{C5}
and
the  dominated convergence theorem,
 we see that $A''_{\alpha, \beta}$ is a bounded function such that
$A''_{\alpha, \beta}(x, \xi)\to 0$ as $|x|+|\xi|\to\infty$.
Hence, from \eqref{C6} with
(with $N=d+1+\lceil |s|\rceil+|\beta|$) and \eqref{C7},
we obtain
\begin{equation}
\label{m-derivatives}
|\partial_x^\alpha\partial_\xi^\beta m(x, \xi)|\leq A''_{\alpha, \beta}(x, \xi)(1+|\xi|)^{s-|\beta|},
\end{equation}

\noi
where $A''_{\alpha, \beta}$ is a bounded function, vanishing at infinity,
and is   independent of the support of the symbol $\s$.
Therefore, from \eqref{C4} and \eqref{m-derivatives},
we conclude that $\sigma^*$ defined in \eqref{C1z} is in $V_{1, 0}^s$, with estimates independent of the support.

Next, we show that $\sigma^*$ has
the asymptotic expansion \eqref{asymptotic}
in the sense of \eqref{asy2}.
On the one hand, given  $M, N\in\mathbb N$,
 by applying the Taylor remainder theorem
to  $\widehat{b}^{(2)}(x, z, z+\xi)$
in  the third argument,
  \eqref{b-derivatives},
  and  Peetre's inequality, we have
\begin{align}
\begin{split}
&\bigg|\widehat b^{(2)} (x, z, z+\xi)- \sum_{|\alpha|<N}\partial_\xi^\alpha\widehat b^{(2)}(x, z, \xi)\frac{z^\alpha}{\alpha!}\bigg|\\
& \quad
= |z|^N
\bigg|\sum_{|\al| = N}\frac N{\al!}
\int_0^1 (1-t)^{N-1}\partial_\xi^\al \widehat b^{(2)}(x, z, tz+\xi)dt\bigg|\\
& \quad  \lesssim (1+|z|)^{N-M}
\sum_{|\al| = N}
\int_0^1
A'_{M; 0, \al}(x, tz+\xi)(1+|tz+\xi|)^{s-N}dt\\
& \quad
\lesssim (1+|\xi|)^{s-N}(1+|z|)^{|s|+2N-M}\wt A_{M;  N}(x, z, \xi),
\end{split}
\label{estimates-b}
\end{align}

\noi
where
\begin{align}
\wt A_{M;  N}(x, z, \xi) =
\sum_{|\al| = N}
\int_0^1
A'_{M; 0, \al}(x, tz+\xi)dt.
\label{C7a}
\end{align}

\noi
Note that,  for fixed $z$,
the condition  $|x|+|\xi|\gg |z|$
implies
 $|x|+|tz+\xi|\sim |x|+|\xi|$ uniformly in $0 \le t \le 1$.
 Then, from \eqref{C7a} and \eqref{C5},
 we see that
$\wt A_{M;  N}$ is a bounded function such that
\begin{align}
\lim_{|x|+|\xi|\to\infty} \wt A_{M;  N}(x, z, \xi) = 0
\label{C7b}
\end{align}

\noi
for each fixed $z \in \R^d$.

On the other hand,
from  \eqref{C1}, the fact that $\phi \equiv 1$ in the neighborhood of $y = 0$,
and \eqref{C1a},
we have
\begin{align}
\begin{split}
\dx^\al \s(x, -\xi)&  =
\dd_y^\al \big(\phi(y)c(x+y, \xi)\big)|_{y = 0}
= \dd_y^\al b(x, 0, \xi)\\
& = \frac {{i}^{|\al|}}{(2\pi)^d} \int
z^\al
 \ft b^{(2)}(x, z, \xi)  dz.
\end{split}
\label{C8}
\end{align}

\noi
Hence, from  \eqref{transpose}, \eqref{mr},
\eqref{C8},
 \eqref{estimates-b} with $M=2N+\lceil |s|\rceil+d+1$,
 and \eqref{C3},
  we obtain
\begin{align*}
&\bigg|\sigma^*(x, \xi)
-\sum_{|\alpha|<N}\frac{i^{-|\al|}}{\alpha!}\partial_x^\alpha\partial_\xi^\alpha\sigma(x, -\xi)\bigg|\\
&\lesssim \int\bigg|\widehat b^{(2)} (x, z, z+\xi)- \sum_{|\alpha|<N}\partial_\xi^\alpha\widehat b^2(x, z, \xi)\frac{z^\alpha}{\alpha!}\bigg|dz +|r(x, \xi)|\\
&\lesssim\widetilde K_N(x, \xi) (1+|\xi|)^{s-N},
\end{align*}
where
\[
\wt  K_N(x, \xi)=
\wt B_N(x, \xi)+
\int \wt A_{2N+\lceil |s|\rceil +d+1; N}(x, z, \xi)(1+|z|)^{-d-1}dz.
\]

\noi
From \eqref{C2}, \eqref{C7b},
and  the  dominated convergence theorem, we see that
$\widetilde K_N (x, \xi)$ is a bounded function such that
\(\widetilde K_N (x, \xi)\to 0\) as $|x|+|\xi|\to\infty$.
A similar computation
yields an analogous bound
on
\[ \dx^\al \dd_\xi^\be \bigg(\sigma^*(x, \xi)
-\sum_{|\alpha|<N}\frac{i^{-|\al|}}{\alpha!}\partial_x^\alpha\partial_\xi^\alpha\sigma(x, -\xi)\bigg), \]
thus establishing \eqref{asy2}.
\end{proof}


\begin{remark}\rm
The conditions on the symbol assumed by Cordes \cite{Cordes} (corresponding to the case $s=0$ in \eqref{VCM}) are only that $\sigma$  has continuous and bounded derivatives  $\partial^\alpha_x \partial^\beta_\xi \sigma$ for all multi-indices $|\alpha|, |\beta| \leq 2N$, where
$N= \big[\frac d2\big] +1$, and that
\begin{equation}\label{finitederivatives}
(\Id-\Delta_x)^N (\Id-\Delta_\xi)^N \sigma(x,\xi) \too 0,
\end{equation}

\noi
as $|x| + |\xi| \to \infty$.   However, as mentioned already, the corresponding operator cannot be Calder\'on-Zygmund (see the definition in the next section) since it may not be even bounded on $L^p$ for $p\neq2$. If one assumes, in addition to \eqref{finitederivatives}, that the symbol belongs to $S^0_{1,0}$, then the corresponding pseudodifferential operator becomes bounded on all weighted spaces $L^p(w)$, where $1<p<\infty$ and $w\in A_p$, and by the extrapolation results in \cite{coy, hl},
 it can be proved to be compact. Likewise, in the main result in \cite{CarSorTor}, the conditions in \eqref{VCM} need to be assumed only for a (large) finite number of derivatives.  These conditions on the symbol $\sigma$ are hence weaker than $\sigma$ belonging to $V_{1, 0}^0$, for which we impose \eqref{VCM} for
 {\it all} the derivatives of $\sigma$
 in order for
 the symbolic calculus, and in particular, for the expansion formula \eqref{asy2}, to hold.
\end{remark}

\begin{remark}\rm
Symbols in the class $V_{1, 0}^0$ are very easy to construct.  Trivially, if we let
$$
\sigma(x,\xi) = m(x)\psi(\xi),
$$
where $m,  \psi \in C^\infty$, $|\partial^\alpha m(x)| \to 0$ as $|x| \to \infty$ and $\psi$ has compact support, then
$
\sigma(x,\xi) \in V^{-N}_{1,0}
$
for all $N$.

More interestingly,  one can consider the following modification of the \emph{elementary symbols \`a la Coifman-Meyer}; see \cite[p.\,41]{CM}. Given $m_j,  \psi \in C^\infty$ such that $|\partial^\alpha m_j(x)| \leq C_\alpha$ for all $x$, $j$ and $\alpha$,  $\partial^\alpha m_j(x) \to 0$ as $|x|\to \infty$ for each $\alpha$ and uniformly in $j$, and $ \psi$ compactly supported on $ \big\{\frac 12<|\xi|<2\big\}$, define
\begin{equation}
\label{elementary}
\sigma(x, \xi)= \sum_{j=0}^\infty  m_j(x) 2^{-j}\psi(2^{-j}\xi).
\end{equation}
Due to the condition on the support of $\psi$, for a fixed $(x, \xi)$ there are only finitely many terms in the summation above. More precisely, for a fixed $j_0$ and $2^{j_0-1}<|\xi|<2^{j_0+1}$, we have
\begin{align*}
|\partial^\alpha_x \partial^\beta_\xi \sigma(x,\xi)| &\lesssim  \sum_{k=-1}^1 |\partial^\alpha_x m_{j_0+k}(x)| 2^{-j_0-k}|\partial^\beta_\xi \psi(2^{-j_0-k}\xi) |\\
&\lesssim_{\beta}  2^{-j_0 (|\beta|+1)}\sum_{k=-1}^1|\partial^\alpha_x m_{j_0+k}(x)|\\
&\lesssim_{\beta}  (1+|\xi|)^{-(|\beta|+1)}\sum_{k=-1}^1\sup_{j}|\partial^\alpha_x m_{j+k}(x)|\\
&\lesssim K_{\alpha, \beta}(x, \xi)(1+|\xi|)^{-|\beta|},
\end{align*}
where $K_{\alpha, \beta}(x, \xi) \to 0$ as $|x|+|\xi| \to \infty.$ Analogous to the case of symbols in $S_{1, 0}^0$, it may be possible to write every symbol in $V_{1, 0}^0$ as some combination of elementary symbols of the
form~\eqref{elementary}. Interested readers may consider adapting some of the methods suggested in
\cite[Lemma~1]{BenTor} to see if such a result holds in this context.
\end{remark}

\section{Compact \(T(1) \) theorems}

In this section, we collect  several definitions and compactness versions of the $T(1)$ theorem.
\begin{definition}\rm
A continuous, linear operator $T: \S(\mathbb R^d) \to \S'(\mathbb R^d)$ is said to be a singular integral operator with a Calder\'on-Zygmund kernel $K$ if the restriction of its distributional Schwartz kernel to the set $\{(x,y)\in \mathbb R^{2d}: x\neq y\}$ agrees with a locally integrable function $K$ satisfying
\begin{equation}\label{regularity}
|\partial^{\beta}K(x,y)|\leq C_K|x-y|^{-d-|\beta|}, \quad |\beta|\leq 1.
\end{equation}
In particular, we have
$$
Tf(x)=\int_{\mathbb R^d}  K(x,y)f(y)  dy
$$
for $x\notin \supp  f$.
\end{definition}

While weaker regularity conditions than  \eqref{regularity} can be assumed, in the case of the pseudodifferential operators we consider in this article, the condition \eqref{regularity}
is actually satisfied for derivatives of any order $|\beta| \in \mathbb N$
(but with constants depending on $|\beta|$).
A singular integral operator with a Calder\'on-Zygmund kernel is said to be a Calder\'on-Zygmund operator,
 if it extends as  a bounded  linear operator $T:L^2(\mathbb R^d) \to L^2(\mathbb R^d)$. In such a case, standard results of the Calder\'on-Zygmund theory imply that the operator $T$ is also bounded on $L^p (\mathbb R^d)$ for $1<p<\infty$. Appropriate end-point and weighted estimates
 are also known to hold for such $T$.

As usual, we write $C^\infty_c(\mathbb R^d)$ for the space of $C^\infty$-functions with compact support, and $C_0(\mathbb R^d)$ for the space of continuous functions vanishing at infinity.

\begin{definition}\rm
We say that a function $\phi \in C_c^\infty (\mathbb R^d)$ is
a \emph{normalized bump function}  of order $M$
if $\supp \phi \subset B_0(1)$
and $\|\dd^\al \phi\|_{L^\infty} \leq 1$
for all multi-indices $\al$ with $|\al| \leq M$.
Here, given $r > 0$ and $x \in \R^d$,
  $B_x(r)$ denotes the ball of radius $r$ centered at $x$.

\end{definition}

Given $x_0 \in \R^d$ and $R>0$, we set
\begin{align}
\phi^{x_0, R}(x) = \phi\bigg(\frac{x-x_0}{R}\bigg).
\label{scaling}
\end{align}

\noi
The statement of the $T(1)$ theorem of David and Journ\'e \cite{DJ} is the following.

\begin{oldtheorem}[$T(1)$ theorem]\label{THM:DJ}
Let $T: \S (\R^d)\to \S'(\R^d)$ be a linear singular integral  operator
with a 
Calder\'on-Zygmund kernel.
Then, $T$ can be extended to a bounded operator on $L^2 (\R^d)$
if and only if
it satisfies the following two conditions\textup{:}

\smallskip

\begin{itemize}
\item[\textup{(A.i)}]
$ T $ satisfies the weak boundedness property\textup{;}
there exists $M \in \mathbb N\cup \{0\}$
such that we have
\begin{align}
 \big| \big\langle T(\phi_1^{x_1, R}), \phi_2^{x_2, R}\big\rangle \big|\les R^d
\label{WBP1}
 \end{align}

\noi
for any normalized bump functions $\phi_1$ and $\phi_2$
of order $M$, $x_1, x_2 \in \R^d$,
and $R>0$.

\smallskip
\item[\textup{(A.ii)}]
$ T(1)$
and $T^*(1)$
are in $\BMO$\,.
\end{itemize}

\end{oldtheorem}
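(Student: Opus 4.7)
My plan is to prove both directions of the equivalence, with the sufficiency direction being the substantive part. For necessity: if $T$ extends boundedly to $L^2(\R^d)$, then (A.i) is immediate from Cauchy--Schwarz together with the bound $\|\phi_i^{x_i,R}\|_{L^2} \les R^{d/2}$ for normalized bump functions. The claim $T(1), T^*(1) \in \BMO$ in (A.ii) requires interpreting these objects suitably, since $1 \notin L^2$: one defines $\langle T(1), a \rangle$ for an $H^1$-atom $a$ supported in a cube $Q$ by splitting $1 = \chi_{2Q} + (1 - \chi_{2Q})$, applying $L^2$-boundedness on the first piece and the Calder\'on--Zygmund kernel decay against the mean-zero atom on the second. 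This yields the atom bound characterizing $\BMO$ by duality with $H^1$.

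For sufficiency, the plan is a two-step reduction. First, I would use paraproducts to reduce to the case $T(1) = T^*(1) = 0$. Given $b \in \BMO$, construct a paraproduct $\Pi_b$ (e.g.\ via $\Pi_b(f) = \sum_j (\Delta_j b)(S_{j-3} f)$ with a Littlewood--Paley decomposition $\{\Delta_j\}$ and low-frequency projections $S_j$) which is a Calder\'on--Zygmund operator bounded on $L^2$, satisfies the WBP, and verifies $\Pi_b(1) = b$, $\Pi_b^*(1) = 0$. Then $\widetilde T := T - \Pi_{T(1)} - (\Pi_{T^*(1)})^*$ is still a singular integral operator with Calder\'on--Zygmund kernel, still satisfies (A.i), and now has $\widetilde T(1) = \widetilde T^*(1) = 0$. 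It suffices to prove $L^2$-boundedness of $\widetilde T$.

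For the reduced case, I would invoke almost orthogonality. Writing $I = \sum_j \Delta_j$ on $L^2$, decompose formally
\begin{equation*}
\widetilde T = \sum_{j,k \in \Z} \Delta_j \widetilde T \Delta_k,
\end{equation*}
and aim to establish the off-diagonal decay
\begin{equation*}
\|\Delta_j \widetilde T \Delta_k\|_{L^2 \to L^2} \les 2^{-\varepsilon |j-k|}
\end{equation*}
for some $\varepsilon > 0$. Given this, the Cotlar--Knapp--Stein lemma yields $\|\widetilde T\|_{L^2 \to L^2} \les 1$.

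The main obstacle is proving the almost orthogonality estimate. The idea is to analyze the kernel $K_{j,k}(x,y)$ of $\Delta_j \widetilde T \Delta_k$: when $j \ge k$, one exploits $\widetilde T^*(1) = 0$ combined with the mean-zero property of the $\Delta_j$ kernels to extract cancellation, subtracting a constant inside the $\widetilde T$ action and using the Calder\'on--Zygmund kernel regularity \eqref{regularity} to gain a factor $2^{-(j-k)}$ in appropriate spatial regions. The case $j < k$ is symmetric, using $\widetilde T(1) = 0$. The diagonal regime $|j-k| = O(1)$ is precisely where the weak boundedness property (A.i) is needed, via a Schur-type test applied to translates of normalized bumps at the scale $2^{-j}$. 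Assembling these three regimes into a uniform bound and verifying the Cotlar--Knapp--Stein hypotheses is the technical heart of the argument.
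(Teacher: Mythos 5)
This statement is cited in the paper as Theorem~A with attribution to David and Journ\'e \cite{DJ}; the paper does not prove it, so there is no in-paper argument to compare against. Your outline is a correct sketch of what has become the standard modern proof. For necessity, Cauchy--Schwarz plus $\|\phi_i^{x_i,R}\|_{L^2}\lesssim R^{d/2}$ gives (A.i), and the atom-splitting argument you describe (using the $L^2$ bound near the atom and kernel decay against the mean-zero atom away from it, then invoking $H^1$--$\BMO$ duality) is exactly how one makes rigorous sense of $T(1)\in\BMO$. For sufficiency, the paraproduct reduction to $\widetilde T(1)=\widetilde T^*(1)=0$ followed by the Littlewood--Paley decomposition $\widetilde T=\sum_{j,k}\Delta_j\widetilde T\Delta_k$ and Cotlar--Knapp--Stein is the route taken in, e.g., Stein's \emph{Harmonic Analysis}.

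The places where you would need to do real work, and which your outline correctly identifies only implicitly, are: (1) proving $\Pi_b$ is $L^2$-bounded for $b\in\BMO$, which requires the Carleson embedding theorem applied to the measure $\sum_j|\Delta_j b(x)|^2\,dx\,\delta_{2^{-j}}(t)$; (2) verifying that $\Pi_b$ is itself a singular integral operator with a Calder\'on--Zygmund kernel and satisfies the WBP, so that $\widetilde T$ inherits the hypotheses; (3) in the almost-orthogonality step, carefully extracting the $2^{-\varepsilon|j-k|}$ gain. On (3) your description is right in spirit, but note that for $j\gg k$ (high-frequency output, low-frequency input) the cancellation you want to use is the mean-zero property of the $\Delta_j$ kernel together with the kernel regularity \eqref{regularity} in the \emph{first} variable; the condition $\widetilde T(1)=0$ is what handles the term where the $\Delta_k$-input is replaced by a constant, and $\widetilde T^*(1)=0$ plays the symmetric role for $j\ll k$. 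The diagonal regime is where (A.i) enters, via the bump-function pairing at scale $2^{-j}$, as you say. With those pieces filled in, the argument is complete and correct.
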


\begin{definition} \label{DEF:WCP1} \rm

We say that 	
a linear singular integral  operator  $T: \S(\R^d) \to \S'(\R^d)$ with a Calder\'on-Zygmund kernel
satisfies the {\it weak compactness property}
if there exists $M \in \mathbb N\cup \{0\}$
such that we have
\begin{align*}
\lim_{|x_0|  + R + R^{-1} \to \infty}
R^{-d} \big| \big\langle T(\phi_1^{x_0 + x_1, R}), \phi_2^{x_0 + x_2, R}\big\rangle \big|
= 0
 \end{align*}

\noi
for any normalized bump functions $\phi_1$ and $\phi_2$
of order $M$ and  $x_1, x_2 \in \R^d$.

\end{definition}

In order to state the compact counterparts of the $T(1)$ theorem,
we also need to recall  the following important subspace of $\BMO$\,.

\begin{definition}\label{DEF:CMO}\rm
The closure of $C_c^\infty(\R^d)$ in the $\BMO$\,~topology is called the space of functions of \emph{continuous mean oscillation}, and it is denoted by $\CMO$\,.
  It  also holds that $\CMO = \overline{C_0(\R^d)}^{\BMO}$;
see \cite[Th\'eor\`eme 7]{Bour}.
\end{definition}

The following version of the compact $T(1)$ theorem was recently proved in \cite{MitSto};
see also~\cite{V}.
For a version of a compact $T(b)$ theorem, see \cite{V2}.

\begin{oldtheorem}[compact $T(1)$ theorem]\label{THM:MS}
Let $T$ be as in Theorem \ref{THM:DJ}.
Then, $T$ can be extended to a compact operator on $L^2$
if and only if
it satisfies the following two conditions\textup{:}

\smallskip

\begin{itemize}
\item[\textup{(B.i)}]
$ T $ satisfies the weak compactness  property.

\smallskip
\item[\textup{(B.ii)}]
$ T(1)$
and $T^*(1)$
are in $\CMO$\,.
\end{itemize}

\end{oldtheorem}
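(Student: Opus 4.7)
My plan is to follow the classical paraproduct strategy used for the $T(1)$ theorem, adapted to produce compactness rather than mere boundedness. For the necessity direction, assume $T$ is compact on $L^2$ and check (B.i) and (B.ii). For (B.i), observe that the $L^2$-normalized bumps $R^{-d/2}\phi_j^{x_0+x_j, R}$ converge weakly to $0$ in $L^2$ as $|x_0|+R+R^{-1}\to\infty$ (their support either drifts to infinity, concentrates to a point, or spreads out, so pairing against any fixed test function vanishes in the limit). Compactness of $T$ then forces strong convergence of the image sequence to $0$, and Cauchy--Schwarz yields the required decay of $R^{-d}|\langle T\phi_1^{x_0+x_1,R},\phi_2^{x_0+x_2,R}\rangle|$. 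For (B.ii), I would define $T(1)$ and $T^*(1)$ through the usual pairing with $H^1$-atoms and invoke that a compact operator is an operator-norm limit of finite-rank operators; writing each such finite-rank approximant explicitly shows that $T(1)$ is the $\BMO$-limit of functions in $C_0(\R^d)$, hence $T(1)\in\CMO$ by the characterization in Definition \ref{DEF:CMO}, and similarly for $T^*(1)$.

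For sufficiency, first apply Theorem \ref{THM:DJ}: (B.i) implies (A.i) directly, and (B.ii) implies (A.ii) since $\CMO\subset\BMO$, so $T$ extends boundedly on $L^2$. The main task is then to upgrade boundedness to compactness via the classical paraproduct reduction
\begin{equation*}
T \, = \, \pi_{T(1)} \, + \, \bigl(\pi_{T^*(1)}\bigr)^{*} \, + \, R,
\end{equation*}
where $\pi_b$ is a standard Coifman--Meyer paraproduct with symbol $b$, and $R$ is a remainder satisfying $R(1)=R^*(1)=0$, still possessing a Calder\'on--Zygmund kernel, and still inheriting the weak compactness property from $T$ (provided the two paraproduct pieces are already shown to be compact, so that they too satisfy (B.i) and subtraction preserves the property).

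I would then establish two independent compactness claims. First, if $b\in\CMO$, then $\pi_b$ is compact on $L^2$: by density, approximate $b$ in $\BMO$ by a sequence $b_n\in C_c^\infty(\R^d)$, use the standard paraproduct estimate $\|\pi_b-\pi_{b_n}\|_{L^2\to L^2}\lesssim\|b-b_n\|_{\BMO}\to 0$, and check directly that a paraproduct with $C_c^\infty$ symbol is compact by realizing it as a norm limit of finite-rank operators through a Littlewood--Paley truncation in both scale and space. Second, for the remainder $R$ satisfying $R(1)=R^*(1)=0$ together with the weak compactness property, I would expand $R$ in a smooth wavelet basis $\{\psi_I\}_{I\in\mathcal{D}}$ and estimate the matrix entries $\langle R\psi_I,\psi_J\rangle$ using kernel regularity, the two cancellation conditions, and (B.i). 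These estimates should not only yield the almost-orthogonality bounds needed for boundedness (via Cotlar--Stein), but additionally show that the contribution of cubes $I$ with $\ell(I)+\ell(I)^{-1}+|c(I)|\geq A$ (and similarly for $J$) vanishes in operator norm as $A\to\infty$, exhibiting $R$ as a norm limit of finite-rank operators and hence compact.

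The main obstacle is the final step: converting the scalar decay in (B.i) into a quantitative operator-norm decay for truncations of $R$. For mere boundedness it suffices to have uniform bounds on the matrix entries, which follow from Calder\'on--Zygmund regularity together with (A.i); compactness, by contrast, requires a uniform rate of vanishing of these entries as the supporting cubes escape to infinity in scale or location, and transferring this entrywise decay into a genuine operator-norm approximation demands a careful Schur-type or almost-orthogonality argument that keeps track of the quantitative decay in (B.i). Once this quantitative version is in place, the two compactness claims combine through the paraproduct decomposition to yield compactness of $T$.
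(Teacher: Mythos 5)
Note first that the paper itself does not prove this theorem; it is stated as an ``oldtheorem'' and credited to \cite{MitSto} (see also \cite{V}). So there is no in-paper proof to compare against, but your sketch is broadly aligned with the strategy of those references: necessity of (B.i) via weak-to-strong convergence under a compact operator, and sufficiency via the paraproduct reduction $T = \pi_{T(1)} + (\pi_{T^*(1)})^* + R$, with compactness of $\pi_b$ for $b\in\CMO$ by density, and compactness of the cancellative remainder $R$ by upgrading an almost-orthogonality / wavelet argument with the quantitative decay supplied by (B.i). You also correctly identify the crux, namely converting the pointwise decay of the matrix entries $\langle R\psi_I,\psi_J\rangle$ into operator-norm decay of truncations.

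The step that does not hold up is your argument for necessity of (B.ii). You propose to approximate $T$ by finite-rank operators $T_n$ in the $L^2$ operator norm and conclude that $T(1)$ is a $\BMO$-limit of $T_n(1)\in C_0(\R^d)$. This fails on two counts. First, a finite-rank operator on $L^2$ has the form $f\mapsto\sum_k\langle f,g_k\rangle h_k$ with $g_k,h_k\in L^2$ only; it is not a singular integral operator, so $T_n(1)$ is not defined by the $H^1$-duality device you invoke, and even when one interprets it directly as $\sum_k(\int g_k)\,h_k$ one would need $g_k\in L^1$ and would still obtain only $h_k\in L^2$, not $h_k\in C_0$. Second, and more seriously, the map $S\mapsto S(1)$ is not continuous from the $L^2\to L^2$ operator norm into $\BMO$: the $\BMO$ norm of $S(1)$ is controlled by the $L^2$ norm \emph{together with} the Calder\'on--Zygmund kernel constants of $S$, and generic finite-rank approximants of $T$ carry no such kernel bounds. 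Hence $\|T_n-T\|_{L^2\to L^2}\to 0$ gives no information about $\|T_n(1)-T(1)\|_{\BMO}$. The correct route is either to show directly that the mean oscillation of $T(1)$ over a cube $Q$ vanishes as the side-length of $Q$ tends to $0$ or to $\infty$, or as the center of $Q$ escapes to infinity, using the kernel decay and a weak-to-strong compactness argument applied to suitable $H^1$ atoms; or alternatively to invoke the independently established fact that a compact Calder\'on--Zygmund operator maps $L^\infty$ into $\CMO$, as recorded in Remark~\ref{REM:CMO}\,(b) of the paper.
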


In the following, we say that  a linear singular integral  operator $T$
with a
Calder\'on-Zygmund kernel
is a {\it compact \CZ operator} if it is compact on $L^2$.

\begin{remark}\label{REM:CMO} \rm
 If $T$ is a compact \CZ operator,
then it is also compact in the following settings:

\smallskip

\begin{itemize}
\item[(a)]
On
$L^p$ for any $1 < p < \infty$,
this follows from interpolation
of compactness
 \cite{Kra}.

\smallskip

\item[(b)]
From $L^\infty$ to $\CMO$\,; see
\cite{PPV}.
.

\smallskip

\item[(c)]
From the Hardy space $H^1$ to $L^1$;
 see~\cite[p.\,1285]{OV}.

\smallskip

\item[(d)]
From $L^1$ to $L^{1, \infty}$; see~\cite{OV}.

\smallskip

\item[(e)]
 From $\BMO$ to $\CMO$
 under the cancellation assumption that $T(1) = T^*(1) = 0$;  see~\cite{PPV}.

\smallskip

\item[(f)]
 From $ H^1$ to $H^1$
 under the cancellation assumption that $T(1) = T^*(1) = 0$; this follows from (e) by duality, but see also \cite{BLOT2}.

\end{itemize}
\end{remark}

\begin{remark} \rm
Our definitions of the weak boundedness property \eqref{WBP1}
and the weak compactness property (Definition \ref{DEF:WCP1})
are slightly different from those in \cite{DJ, MitSto},
but one can easily check that they are equivalent.
\end{remark}

As in the boundedness case
(see \cite[Theorem 3 on p.\,294]{Stein} and
also \cite{GT, BO2}),
 it is possible to state a version of the compact $T(1)$ theorem without referring to $T(1)$ at all. The following version was established in \cite{BLOT2}.

\begin{oldtheorem}[compact $T(1)$ theorem \`a la Stein]\label{THM:Stein}
Let $T$ be as in Theorem \ref{THM:DJ}.
Then, $T$ can be extended to a compact operator
on $L^2$
if and only if
the following two conditions hold\textup{:}

\begin{itemize}
\item[(i)]
\textup{($L^2$-condition).}
There exists $M \in \mathbb{N} \cup\{0\}$
such that
\begin{align*}
\lim_{|x_0| + R + R^{-1} \to \infty}
\bigg(
R^{-\frac d2}  \|T(\phi^{x_0, R})\|_{L^2}
+
R^{-\frac d2}   \|T^*(\phi^{x_0, R})\|_{L^2}
\bigg) = 0
\end{align*}

\noi
 holds for any normalized bump function $\phi$
of order $M$.

\smallskip

\item[(ii)]
\textup{($L^\infty$-condition).}
Given any
weak-$\ast$ convergent sequence $\{f_n\}_{n \in \N}$
in $L^\infty$
such that  $\{T(f_{n})\}_{n \in \N}$
and $\{T^*(f_{n})\}_{n \in \N}$
are bounded in $\CMO$,
both the sequences
  $\{T(f_{n})\}_{n \in \N}$
and $\{T^*(f_{n})\}_{n \in \N}$
are precompact in $\CMO$\,.

\end{itemize}

\end{oldtheorem}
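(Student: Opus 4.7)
The plan is to reduce Theorem \ref{THM:Stein} to the Mitkovski--Stockdale compact $T(1)$ theorem (Theorem \ref{THM:MS}) by showing that the pair (i) + (ii) is equivalent to the conjunction of the weak compactness property (B.i) and the membership $T(1), T^*(1) \in \CMO$ from (B.ii).

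For the forward direction [(i) + (ii) $\Rightarrow$ $L^2$-compactness], the implication (i) $\Rightarrow$ (B.i) follows directly from Cauchy--Schwarz:
\begin{align*}
R^{-d}\big|\langle T(\phi_1^{x_0+x_1, R}), \phi_2^{x_0+x_2, R}\rangle\big| \leq R^{-d/2}\|T(\phi_1^{x_0+x_1, R})\|_{L^2}\cdot R^{-d/2}\|\phi_2^{x_0+x_2, R}\|_{L^2},
\end{align*}
since $R^{-d/2}\|\phi_2^{x_0+x_2, R}\|_{L^2} = \|\phi_2\|_{L^2} \les 1$, while $|x_0+x_j|+R+R^{-1} \to \infty$ exactly when $|x_0|+R+R^{-1}\to \infty$. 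The substantive content is the implication (ii) $\Rightarrow$ (B.ii). Here I would first invoke the classical Stein version of $T(1)$ (the boundedness analogue of Theorem \ref{THM:Stein}): the vanishing limits in (i) and (ii) imply the corresponding uniform sup bounds, with uniform control on compact parameter regions coming from the kernel estimate~\eqref{regularity}, and hence $T$ extends boundedly to $L^2$. Next, take smooth cutoffs $\chi_n \in C_c^\infty(\R^d)$ with $\chi_n \equiv 1$ on $B_0(n)$ and $\supp \chi_n \subset B_0(2n)$; then $\chi_n \rightharpoonup 1$ in the weak-$\ast$ sense in $L^\infty$. Since $\chi_n$ is bounded and compactly supported, $T(\chi_n)$ is well-defined and is uniformly bounded in $\BMO$; moreover, one can verify directly from the \CZ kernel structure that $T(\chi_n)$ lies in $\CMO$. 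Condition (ii) then extracts a $\BMO$-convergent subsequence with limit $g \in \CMO$, and one identifies $g = T(1)$ modulo constants by pairing against $\psi \in C_c^\infty$ with $\int \psi = 0$ and passing to the limit using the weak-$\ast$ convergence $\chi_n \rightharpoonup 1$. Repeating the argument for $T^*$ gives (B.ii), and Theorem \ref{THM:MS} yields the $L^2$-compactness of $T$.

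For the converse [$L^2$-compactness $\Rightarrow$ (i) + (ii)], both implications come from known properties of compact \CZ operators. For (i), the family $\{R^{-d/2}\phi^{x_0,R}\}$ is bounded in $L^2$ with norm $\|\phi\|_{L^2}$ and converges weakly to $0$ as $|x_0|+R+R^{-1}\to\infty$, which is checked by testing against $C_c^\infty$ functions in each of the regimes $R\to\infty$, $R\to 0$, and $|x_0|\to\infty$. Since compact operators map weakly null sequences to norm null ones, $R^{-d/2}T(\phi^{x_0,R})\to 0$ in $L^2$, and similarly for $T^*$, which is also compact on $L^2$. For (ii), Remark \ref{REM:CMO}(b) states that a compact \CZ operator is compact from $L^\infty$ to $\CMO$; since every weak-$\ast$ convergent sequence in $L^\infty$ is bounded, its $T$-image and $T^*$-image are precompact in $\CMO$, yielding (ii).

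I expect the main obstacle to be the identification step in the forward direction, namely verifying that (a) the cutoffs $T(\chi_n)$ in fact lie in $\CMO$ (not merely $\BMO$) with uniform bounds, and (b) any $\BMO$-subsequential limit of $\{T(\chi_n)\}$ coincides with $T(1)$ modulo constants. Both tasks must be carried out using only the \CZ kernel structure and the $L^2$-boundedness already deduced from Stein's classical theorem, without invoking the $L^2$-compactness that is the very conclusion we are trying to establish.
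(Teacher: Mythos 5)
The paper does not prove Theorem~\ref{THM:Stein}: it states it as an ``oldtheorem'' and cites~\cite{BLOT2} for the proof, so there is no in-paper argument to compare against. Your plan --- deducing the result from the Mitkovski--Stockdale Theorem~\ref{THM:MS} by showing that (i)+(ii) is equivalent to (B.i)+(B.ii) --- is a natural strategy, and the converse direction (compactness $\Rightarrow$ (i)+(ii)) together with the implication (i) $\Rightarrow$ (B.i) via Cauchy--Schwarz are sound. However, the forward implication (ii) $\Rightarrow$ (B.ii) contains a genuine gap precisely at the step you flag. You assert that $T(\chi_n) \in \CMO$ (with uniform bounds) ``can be carried out using only the \CZ kernel structure and the $L^2$-boundedness.'' That is false. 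Consider the paraproduct $T=\Pi_b$, where $b\in L^\infty\cap \BMO\setminus\CMO$ (e.g.\ $b=\ind_{B_0(1/2)}$). Then $\Pi_b$ is a singular integral operator with a smooth \CZ kernel, bounded on $L^2$, yet for $n$ large, $\Pi_b(\chi_n)$ agrees with $b$ modulo a smooth low-frequency correction on any fixed ball, hence inherits the non-vanishing small-cube oscillation of $b$ and does not lie in $\CMO$. Even the degenerate example $T=M_g$ (pointwise multiplication by $g\in L^\infty\cap\BMO\setminus\CMO$, which has the vanishing off-diagonal kernel $K\equiv 0$) shows that $L^2\cap\BMO\not\subset\CMO$ in the image of a \CZ operator. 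Thus, verifying the hypothesis of~(ii) for $f_n=\chi_n$ cannot rest on the kernel bounds and $L^2$-boundedness alone; it must exploit condition~(i) (or~(ii) in a bootstrapped manner), and this is where the real work of~\cite{BLOT2} lies. A lesser imprecision: you attribute the ``uniform control on compact parameter regions'' needed for the classical Stein $T(1)$ input to the kernel estimate~\eqref{regularity}. The kernel bounds say nothing about $\|T(\phi^{x_0,R})\|_{L^2}$ near the support of $\phi^{x_0,R}$; what works is a decomposition of $\phi^{x_0,R}$ into $O(K^d)$ normalized bumps at a much smaller radius $R/K$, combined with the smallness guaranteed by~(i) at those scales. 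This yields the uniform sup bound, but it is an argument driven by~(i), not by~\eqref{regularity}.
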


\section{An application}

In this section, we present  an application
of our symbolic calculus and the compact $T(1)$ theorem.

We first note that for a pseudodifferential operator with symbol $\s$,
 we have $T_\sigma (1)=\sigma (x, 0)$. Thus, if $\sigma\in V_{1, 0}^0$,
then we have
$$
| \sigma (x, 0)|\leq K_\alpha (x, 0) \too 0,
$$

\noi
as $|x| \to \infty$.
Moreover,
since
$\sigma\in S_{1, 0}^0$,  $T_\sigma$ is a Calder\'on-Zygmund operator
and thus it follows from Theorem \ref{THM:DJ}
that $T_\s(1) \in \BMO$.
In particular,
this implies that $T_\sigma (1) = \sigma (x, 0)\in \CMO = \overline{C_0}^{\BMO}$.
In view of  Theorem \ref{THM:CPT},
we  have $\s^* \in V^0_{1, 0}$
and thus
we also obtain $T_\sigma^* (1)
= T_{\sigma^*} (1) \in\CMO$.

We already know
from \cite[Theorem 3.2]{CarSorTor}
that the class $V^0_{1, 0}$ actually produces compact operators on $L^p(w)$ for all  $1<p<\infty$ and $w\in A_p$.
We will use the symbolic calculus in a crucial way to show the compactness of the commutators of
pseudodifferential operators with symbols in $V_{1, 0}^1$ with pointwise multiplication by bounded Lipschitz functions.

The commutator $[T_\sigma, M_a]$
of a pseudodifferential operator $T_\sigma$ with the multiplication $M_a$ by the function $a$ is given by
$$
[T_\sigma, M_a](f) =(T_\sigma M_a-M_aT_\sigma)(f) = T_\sigma (af)-aT_\sigma(f).
$$

\noi
Recall that if $\sigma \in S_{1, 0}^1$ and $a \in \BMO$ then $[T_\sigma, M_a]$ is bounded on $L^2$, while if $a\in \CMO$ then $[T_\sigma, M_a]$ is compact on $L^2$; see \cite{CRW} and \cite{U}, respectively.  On the other hand, if $\sigma \in V^0_{1,0}$ and $a\in L^\infty$, then it is trivial to see that $[T_\sigma, M_a]$ is a compact operator on $L^2$ because the class of compact operators is a two-sided ideal of the algebra of bounded operators.

Let now $\s\in V_{1, 0}^1$ and $a$ be a bounded Lipschitz function, that is $a\in L^\infty$ and $\nabla a\in L^\infty$.
Since we have $V_{1, 0}^1\subset S_{1, 0}^1$,
it follows from \cite[Theorem 4 on p.\,90]{CoiMey} that  the commutator
$[T_\sigma, M_a]$ is a linear Calder\'on-Zygmund operator,
 bounded on $L^p$ for any $1<p<\infty$.
 In the following,
we show that this boundedness can be improved to compactness for symbols $\sigma\in V_{1, 0}^1$.

\begin{theorem}
\label{THM:2}
Let $\sigma\in V_{1, 0}^1$ and let $a$ be a bounded Lipschitz function.
 Then, $[T_\sigma, M_a]$ is a compact
\CZ operator.
\end{theorem}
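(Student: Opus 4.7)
The plan is to apply the compact $T(1)$ theorem (Theorem~\ref{THM:MS}) to $[T_\sigma, M_a]$, which is a \CZ operator by the Coifman-Meyer result cited just above. One must verify (B.i) the weak compactness property and (B.ii) that both $[T_\sigma, M_a](1)$ and $[T_\sigma, M_a]^*(1)$ lie in $\CMO$. The symbolic calculus of Theorem~\ref{THM:CPT}, extended to $V_{1,0}^1$ via Remark~\ref{V10s}, plays a crucial role in dealing with the adjoint: writing $T_\sigma^* = T_{\sigma^*}$ with $\sigma^*\in V_{1,0}^1$, we have
\[
[T_\sigma, M_a]^* \;=\; M_a T_{\sigma^*} - T_{\sigma^*} M_a \;=\; -[T_{\sigma^*}, M_a],
\]
an operator of exactly the same form as the original. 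It therefore suffices to verify (B.i) and $[T_\sigma, M_a](1)\in\CMO$ for an arbitrary $\sigma\in V_{1,0}^1$, with the hypotheses on the adjoint then following by the same argument applied to $\sigma^*$ in place of $\sigma$.

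The central identity comes from writing $a(y) - a(x) = \nabla a(x)\cdot(y-x) + E(x,y)$, where $|E(x,y)|\le \min(2\|a\|_\infty,\,2\|\nabla a\|_\infty\,|y-x|)$, together with the kernel relation $(y_j - x_j) K_\sigma(x,y) = -i K_{\partial_{\xi_j}\sigma}(x,y)$ obtained by integrating by parts in $\xi$. These yield
\[
[T_\sigma, M_a] \;=\; -i\sum_{j=1}^d M_{\partial_{x_j} a}\, T_{\partial_{\xi_j}\sigma} \;+\; R,
\]
where $R$ is the integral operator with kernel $K_\sigma(x,y)\,E(x,y)$. Since $\partial_{\xi_j}\sigma \in V_{1,0}^0$, each $T_{\partial_{\xi_j}\sigma}$ is compact on $L^2$ by \cite[Theorem~3.2]{CarSorTor}, and composing with the bounded multiplier $M_{\partial_{x_j} a}$ preserves compactness; so the main term is itself a compact operator, and its action on the constant function $1$ is the explicit function $-i\sum_j \partial_{x_j} a(x)\,\partial_{\xi_j}\sigma(x, 0)$, which is bounded (since $\nabla a\in L^\infty$) and vanishes at infinity (since $\partial_{\xi_j}\sigma(x,0) \in C_0(\R^d)$ by the $V_{1,0}^0$ property), hence lies in $C_0(\R^d)\subset\CMO$.

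The main obstacle is analyzing the remainder $R$: because $a$ is only Lipschitz, the Taylor error $E$ is not quadratic in $|y-x|$, so the kernel only enjoys the borderline estimate $|K_R(x,y)|\lesssim\min(|x-y|^{-d},|x-y|^{-d-1})$, now with an additional controlling function inheriting the spatial vanishing from $\sigma\in V_{1,0}^1$. Showing $R(1)\in\CMO$ is the central technical step; I would approximate $R(1)$ by $R(\chi_N)$ for smooth cutoffs $\chi_N\uparrow 1$, using the kernel size together with the spatial vanishing of the controlling functions $K_{\alpha,\beta}$ to conclude $R(\chi_N)\in C_0(\R^d)$, and then verify that the mean oscillation of $R(1)$ on small, large, and distant balls tends to zero in order to place $R(1)$ in $\overline{C_0(\R^d)}^{\BMO}=\CMO$. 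Finally, the weak compactness property for $[T_\sigma, M_a]$ follows by splitting along the same decomposition: the main compact term trivially satisfies (B.i), while the pairings $\langle R\phi_1^{x_0+x_1,R},\phi_2^{x_0+x_2,R}\rangle$ are shown to be $o(R^d)$ as $|x_0|+R+R^{-1}\to\infty$ using the refined kernel bounds combined with the vanishing-at-infinity of the data $K_{\alpha,\beta}(x,\xi)$ associated with $\sigma\in V_{1,0}^1$.
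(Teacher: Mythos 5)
Your decomposition via Taylor expanding $a$ at $x$---writing $a(y)-a(x)=\nabla a(x)\cdot(y-x)+E(x,y)$ and using the kernel identity $(y_j-x_j)K_\sigma=-iK_{\partial_{\xi_j}\sigma}$---is a genuinely different route from the paper's, which instead factors the symbol: after removing $\widetilde\sigma(x,\xi)=\sigma(x,0)\psi(\xi)$, one writes $\sigma_0(x,\xi)=\sum_j\xi_j\sigma_j(x,\xi)$ with $\sigma_j(x,\xi)=\int_0^1\partial_{\xi_j}\sigma_0(x,t\xi)\,dt\in V_{1,0}^0$, so that $T_{\sigma_0}(f)=-i\sum_jT_{\sigma_j}(\partial_jf)$. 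The crucial difference is \emph{where the derivative lands}. In the paper's factorization, plugging in $f=1$ annihilates every term in which the derivative falls on the argument and leaves the clean identity $[T_{\sigma_0},M_a](1)=-i\sum_jT_{\sigma_j}(\partial_ja)$: an image of the $L^\infty$ function $\partial_ja$ under a compact \CZ operator, hence in $\CMO$ by Remark~\ref{REM:CMO}\,(b). In your decomposition the multiplier $M_{\partial_ja}$ sits \emph{outside} the operator, and applying the main term to $1$ gives the pointwise product $-i\sum_j\partial_ja(x)\,\partial_{\xi_j}\sigma(x,0)$.

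This is where the proposal breaks. For a merely Lipschitz $a$, $\nabla a$ is bounded but can be discontinuous, and a pointwise product of a discontinuous $L^\infty$ function with a nonvanishing $C_0$ function need not lie in $\CMO$ (e.g.\ $\mathrm{sgn}(x)\,g(x)$ with $g\in C_c^\infty$, $g(0)\neq 0$, has mean oscillation bounded below on arbitrarily small intervals around the origin). Since $[T_\sigma,M_a](1)$ itself \emph{is} in $\CMO$, the offending jump must be exactly cancelled by $R(1)$, so $R(1)\notin\CMO$ in such examples; the program ``main term in $C_0\subset\CMO$, then show $R(1)\in\CMO$'' therefore cannot close. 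A second, independent gap: $R$ is not a singular integral operator with a \CZ kernel. Taking one $x$-derivative of $K_\sigma(x,y)E(x,y)$ produces $\partial_xE(x,y)=-D^2a(x)\cdot(y-x)$, and $D^2a$ does not exist as a bounded function for merely Lipschitz $a$; so the kernel estimates you invoke to bound $\langle R\phi_1^{x_0+x_1,R},\phi_2^{x_0+x_2,R}\rangle$ are not available. (You also note, correctly, that $|K_R(x,y)|$ is only $O(|x-y|^{-d})$ near the diagonal; but this is exactly the size of the full commutator kernel, so the decomposition has not actually reduced the singularity. A minor slip: the bound $|E(x,y)|\le\min(2\|a\|_\infty,2\|\nabla a\|_\infty|y-x|)$ is false, since $|\nabla a(x)\cdot(y-x)|$ is unbounded for large $|y-x|$.) The remedy is to perform the factorization on the symbol, not on $a$: then both the computation of $T(1)$, $T^*(1)$ and the weak compactness estimate reduce to the compact operators $T_{\sigma_j}$ with $\sigma_j\in V_{1,0}^0$, and no remainder operator appears.
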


\begin{proof}

We will follow  the argument in \cite{BenOh}
for boundedness of commutators in the bilinear setting.
We break the proof of Theorem \ref{THM:2} into several steps.

\smallskip

\noi
$\bullet$ {\it Step 1:}
{\it Reduction from $V_{1, 0}^1$ to $V_{1, 0}^0$}.

 Let $\widetilde{\sigma}(x,\xi)=\sigma (x, 0) \psi(\xi)$,
 where $\psi \in C^\infty_c$ and $\psi(0)=1$. Then, $\widetilde{\sigma} \in V^{-N}_{1,0}$ for any $N\geq 0$.
Define  $\s_0=\s - \wt \s \in V^{1}_{1,0}$ such that
$T_{\s}=T_{\s_0}+T_{\wt \s}$. 
Since $T_{\wt \s}$ is compact on $L^2$ and $a\in L^\infty$, so is the commutator
$[T_{\wt \sigma}, M_a].$ Moreover, since $\wt \sigma \in S^1_{1,0}$, we have that $[T_{\wt \sigma}, M_a]$ is also a compact Calder\'on-Zygmund operator.
Hence, by Theorem \ref{THM:MS},
we have
\begin{equation}
[T_{\wt \sigma}, M_a](1) \in \CMO.
\label{added}
\end{equation}

Next, by the fundamental theorem of calculus
and the fact that  $\sigma_0 (x, 0)=0$,
we have
\begin{equation}
\label{repres1}
\sigma_0 (x, \xi)=\sigma_0 (x, \xi)-\sigma_0 (x, 0)=\sum_{j=1}^d \xi_j\sigma_j (x, \xi),
\end{equation}
where
\[
\sigma_j(x, \xi)=\int_0^1 \partial_{\xi_j'}\sigma_0 (x, \xi')|_{\xi'=t\xi}\,dt.
\]

\noi
We claim that $\sigma_j\in V_{1, 0}^0$ for each $j =1, \dots, d$.
Noting that
\[
t(1+  t|\xi|)^{-1}\leq (1+|\xi|)^{-1},
\]

\noi
 for any $0 \le t\le 1$
 and applying
\eqref{VCM}, we have
\begin{align*}
|\partial_x^\alpha\partial_\xi^\beta\sigma_j (x, \xi)|
&=\bigg|\int_0^1 t^{|\beta|}\partial_x^\alpha\partial_{\xi}^\beta\partial_{\xi_j'}\sigma_0 (x, \xi')|_{\xi'=t\xi}\,dt\bigg|\\
&\leq \int_0^1 t^{|\beta|}K_{\alpha, \beta+1}(x, t\xi)(1+t|\xi|)^{-|\beta|}dt\\
&\leq (1+|\xi|)^{-|\beta|}\int_0^1 K_{\alpha, \beta+1}(x, t\xi)dt\\
&=:(1+|\xi|)^{-|\beta|}\widetilde{K}_{\alpha, \beta}(x, \xi).
\end{align*}

\noi
Note that, since $K_{\alpha, \beta+1}$ is a bounded function, so is $\wt K_{\alpha, \beta}$.
Moreover, for each $t>0$, 
$K_{\alpha, \beta+1}(x, t\xi)$ tends to $0$ as $|x|+|\xi|\to \infty$.
Therefore, by the dominated convergence theorem
we conclude that  $\wt K_{\alpha, \beta}(x, \xi)$ tends to  $0$ as $|x|+|\xi|\to\infty$.
This proves $\sigma_j\in V_{1, 0}^0$.

From \eqref{repres1}, we have
\begin{equation}
\label{repres2}
T_{\sigma_0} (f)= - i \sum_{j=1}^d T_{\sigma_j}(\partial_{j}f).
\end{equation}

\noi
Hence, we obtain
\begin{equation}
[T_{\sigma_0}, M_a](f)=
-i \sum_{j=1}^d
\Big(T_{\sigma_j}(f\partial_{j}a+a\partial_{j}f)-aT_{\sigma_j}(\partial_{j}f)\Big).
\label{repres3}
\end{equation}

\smallskip

\noi
$\bullet$ {\it Step 2:
The commutator and its transpose on the function 1.}

From  \eqref{repres3}, we have
\begin{align}
[T_{\sigma_0}, M_a](1)
=-i \sum_{j=1}^d T_{\sigma_j}(\partial_{j}a).
\label{X1}
\end{align}

\noi
Since $\sigma_j\in V_{1, 0}^0$,
it follows
that $T_{\sigma_j}$ is a compact Calder\'on-Zygmund operator
and, by Remark~\ref{REM:CMO}\,(b), $T_{\sigma_j}: L^\infty\to\CMO$\,.
Hence, recalling that  $\nb a\in L^\infty$,
we obtain from~\eqref{X1} that
\begin{equation}
\label{T1}
[T_{\sigma_0}, M_a](1)\in\CMO\,.
\end{equation}

\noi
Therefore, from
  \eqref{added} and \eqref{T1} with $\s = \s_0 + \wt \s$, we conclude that
\begin{align}
  [T_{\sigma}, M_a](1)\in\CMO.
\label{T1a}
\end{align}

A simple calculation shows that the transpose of the commutator satisfies
\[
[T_\sigma, M_a]^*=-[T_\sigma^*, M_a]=-[T_{\sigma^*}, M_a],
\]
where $\sigma^*\in V_{1, 0}^1$; see Theorem \ref{THM:CPT}.
Therefore, repeating the calculation above by replacing $\sigma$ by $\sigma^*$, we also obtain
\begin{equation}
\label{T*1}
[T_\sigma, M_a]^*(1)\in\CMO\,.
\end{equation}

\smallskip

\noi
$\bullet$ {\it Step 3:
Weak compactness property.}

From Steps 1 and 2, we may assume that $\s(x,0)= 0$ so the formula \eqref{repres1} holds. Fix $x_1, x_2\in\R^d$ and two normalized bump functions $\phi_1$ and $ \phi_2$.
In this part, we prove the weak compactness property
of $[T_\sigma, M_a]$, namely,
\begin{align}
\lim_{|x_0|  + R + R^{-1} \to \infty}
R^{-d} \big| \big\langle [T_\sigma, M_a](\phi_1^{x_0 + x_1, R}), \phi_2^{x_0 + x_2, R}\big\rangle \big|
= 0.
\label{X2}
 \end{align}

\noi
In the following,
we assume that  $\phi_1$ and $ \phi_2$
are normalized  bump functions of order at least $M \ge 1$.
The $M= 0$ case follows from \eqref{X2} for $M \ge 1$
and an approximation argument.

First,  note that
$[T_\sigma, M_a] =[T_\sigma, M_{\tilde{a}}]$, where $\widetilde{a}=a-a(x_0+x_1)$.
Thus, for each given $x_0 \in \R^d$,
we may assume without loss of generality that
 $a(x_0+x_1)=0$.
Then, from the fundamental theorem of calculus, we have
\begin{equation}
\label{X3}
\|a\|_{L^\infty(B_{x_0+x_1}(R))}\lesssim R\|\nabla a\|_{L^\infty}.
\end{equation}

\noi
Similarly,
we  have
\begin{equation}
\label{X3a}
\|a\|_{L^\infty(B_{x_0+x_2}(R))}\lesssim (|x_1 - x_2| + R)\|\nabla a\|_{L^\infty}
\les  R\|\nabla a\|_{L^\infty}
\end{equation}

\noi
for any $R \ges |x_1 - x_2|$.
Recall also from \eqref{scaling} that
\begin{equation}
\label{X4}
\|\partial_x^\alpha \phi_j^{x_0+x_j, R}\|_{L^2}\lesssim R^{\frac{d}{2}-|\alpha|},
\end{equation}

\noi
uniformly in  $x_0, x_j \in \R^d$, $j = 1, 2$.

Given  $x_0 \in \R^d$ and $R> 0$ (recall that $x_1$ and $x_2$ are fixed),
set
\begin{align*}
A(x_0, R)&:=R^{-d}\left|\big\langle T_\sigma (a\phi_1^{x_0+x_1, R}), \phi_2^{x_0+x_2, R}\big\rangle\right|,\\
B(x_0, R)&:=R^{-d}\left|\big\langle aT_\sigma (\phi_1^{x_0+x_1, R}), \phi_2^{x_0+x_2, R}\big\rangle\right|.
\end{align*}

\noi
Then,
we have
\begin{align}
R^{-d}\left|\langle [T_\sigma, M_a](\phi_1^{x_0+x_1, R}), \phi_2^{x_0+x_2, R}\rangle\right|\leq A(x_0, R)+B(x_0, R).
\label{X4a}
\end{align}

\noi
Let $R \ges |x_1 - x_2|$.
Then, from \eqref{X4}, \eqref{X3a}, and \eqref{repres2}, we have
\begin{align}
\begin{split}
B(x_0, R)&\leq R^{-d}\|aT_\sigma (\phi_1^{x_0+x_1, R})\|_{L^2(B_{x_0+x_2}(R))}\|\phi_2^{x_0+x_2, R}\|_{L^2}\\
&\lesssim R^{-\frac{d}{2}}\|a\|_{L^\infty(B_{x_0+x_2}(R))}\|T_\sigma (\phi_1^{x_0+x_1, R})\|_{L^2(B_{x_0+x_2}(R))}\\
&\lesssim
R^{-\frac{d}{2}+1}\|\nabla a\|_{L^\infty}\sum_{j=1}^d\|T_{\sigma_j}(\partial_j\phi_1^{x_0+x_1, R})\|_{L^2}\\
&\lesssim
 \|\nabla a\|_{L^\infty}\sum_{j=1}^d R^{-\frac{d}{2}}\|T_{\sigma_j}(\phi_{1, j}^{x_0+x_1, R})\|_{L^2},
\end{split}
\label{X5}
\end{align}

\noi
where $\phi_{1, j}=\partial_j\phi_1$.
Recall now from Step 1
that $T_{\sigma_j}$ is a compact \CZ operator.
 Then, by noting that $\phi_{1, j}$ is a normalized  bump function (of order $M-1$),
 it follows from
 Theorem~\ref{THM:Stein} (necessity of the $L^2$-condition)
 that
\begin{align}
\lim_{|x_0|+R+R^{-1}\to\infty}R^{-\frac{d}{2}}\big\|T_{\sigma_j}(\phi_{1, j}^{x_0+x_1, R})\big\|_{L^2}=0
\label{X6}
\end{align}

\noi
for any $j = 1, \dots, d$.
Hence, from \eqref{X5} and \eqref{X6}, we obtain
\begin{align}
\lim_{|x_0|+R+R^{-1}\to\infty}B(x_0, R)=0.
\label{X7}
\end{align}

\noi
Proceeding as in \eqref{X5}, we have
\begin{align*}
A(x_0, R)&\leq R^{-d}\|T_\sigma (a\phi_1^{x_0+x_1, R})\|_{L^2}\|\phi_2^{x_0+x_2, R}\|_{L^2}\\
&\lesssim R^{-\frac{d}{2}}\sum_{j=1}^d\|T_{\sigma_j}(\partial_j(a\phi_1^{x_0+x_1, R})\|_{L^2}\\
&\lesssim \sum_{j=1}^d \|T_{\sigma_j}(\partial_j a  \cdot R^{-\frac{d}{2}}  \phi_1^{x_0+x_1, R})\|_{L^2}\\
& \quad +
\sum_{j=1}^d
\|T_{\sigma_j}(R^{ -1}  \ind_{B_{x_0+x_1}(R)} a \cdot R^{-\frac d2} \phi_{1, j}^{x_0+x_1, R})\|_{L^2},
\end{align*}

\noi
where, in the last step, we used the chain rule:
$\partial_j\phi_1^{x_0+x_1, R} = R^{-1}(\partial_j\phi_1)^{x_0+x_1, R}
= R^{-1} \phi_{1, j}^{x_0+x_1, R}$
and the fact that $\phi_{1, j}^{x_0+x_1, R}$ is supported on the ball $B_{x_0+x_1}(R)$.
Since  $\partial_j a\in L^\infty$, we observe that
$\partial_j a  \cdot R^{-\frac{d}{2}}  \phi_1^{x_0+x_1, R}$ converge weakly to $0$ in $L^2$ as $|x_0| + R + R^{-1} \to \infty$.
From \eqref{X3}, we have $R^{ -1}  \ind_{B_{x_0+x_1}(R)} a \in L^\infty$
and thus we see that
$R^{ -1}  \ind_{B_{x_0+x_1}(R)} a \cdot R^{-\frac d2} \phi_{1, j}^{x_0+x_1, R}$
also
converge weakly to $0$ in $L^2$ as $|x_0| + R + R^{-1} \to \infty$.
Hence, since $T_{\sigma_j}$ is compact on $L^2$, we obtain
\begin{align}
\lim_{|x_0|+R+R^{-1}\to\infty}A(x_0, R)=0.
\label{X8}
\end{align}

\noi
Therefore, from \eqref{X4a}, \eqref{X7}, and \eqref{X8}, we obtain
\[
\lim_{|x_0|+R+R^{-1}\to\infty}R^{-d}\left|\langle T(\phi_1^{x_0+x_1, R}), \phi_2^{x_0+x_2, R}\rangle\right|=0,
\]
which proves the weak compactness property of the commutator $[T_\sigma, M_a]$.

\smallskip

Finally, from  Theorem \ref{THM:MS} with
Steps 2 and 3,
we conclude that
the commutator $[T_\sigma, M_a]$
is
a compact \CZ operator.
\end{proof}

\begin{ackno}\rm
\'A.B.  acknowledges the support from
 an  AMS-Simons Research Enhancement Grant for PUI Faculty.
 T.O.~was supported by the European Research Council (grant no.~864138 ``SingStochDispDyn")
and  by the EPSRC
Mathematical Sciences
Small Grant  (grant no.~EP/Y033507/1). The authors would also like to thank the editor and the anonymous referees for their comments and suggestions.

\end{ackno}

\end{document}